\renewcommand\eqref[1]{(\ref{#1})} 
\numberwithin{equation}{section}
\theoremstyle{plain}
\newtheorem{thm}{Theorem}[section]
\newtheorem{lem}[thm]{Lemma}
\theoremstyle{definition}
\newtheorem{rem}[thm]{Remark}
\begin{document}
\title[Rayleigh-Faber-Krahn inequality on $\mathbb{S}^{n}$ and $\mathbb{H}^{n}$]
{On first and second eigenvalues of Riesz transforms in spherical and hyperbolic geometries}

\author[Michael Ruzhansky]{Michael Ruzhansky}
\address{
  Michael Ruzhansky:
  \endgraf
  Department of Mathematics
  \endgraf
  Imperial College London
  \endgraf
  180 Queen's Gate, London SW7 2AZ
  \endgraf
  United Kingdom
  \endgraf
  {\it E-mail address} {\rm m.ruzhansky@imperial.ac.uk}
  }
\author[Durvudkhan Suragan]{Durvudkhan Suragan}
\address{
  Durvudkhan Suragan:
 \endgraf
  Institute of Mathematics and Mathematical Modelling
  \endgraf
  125 Pushkin str., 050010 Almaty, Kazakhstan
  \endgraf
  and
  \endgraf
  Department of Mathematics
  \endgraf
  Imperial College London
  \endgraf
  180 Queen's Gate, London SW7 2AZ
  \endgraf
  United Kingdom
  \endgraf
  {\it E-mail address} {\rm d.suragan@imperial.ac.uk}}

\thanks{The authors were supported in parts by the EPSRC grant EP/K039407/1 and by the Leverhulme Grant RPG-2014-02,
as well as by the MESRK grant 5127/GF4.}

 \keywords{Convolution operators, $n$-sphere, real hyperbolic space,
 Rayleigh-Faber-Krahn inequality, Hong-Krahn-Szeg{\"o} inequality}
 \subjclass{35P99, 47G40, 35S15}

\begin{abstract}
In this note we prove an analogue of the Rayleigh-Faber-Krahn inequality, that is, that the geodesic ball is a maximiser of the first eigenvalue of some convolution type integral operators, on the sphere $\mathbb{S}^{n}$ and on the real hyperbolic space $\mathbb{H}^{n}$.
It completes the study of such question for complete, connected, simply connected Riemannian manifolds of constant sectional curvature.
We also discuss an extremum problem for the second eigenvalue on $\mathbb{H}^{n}$ and prove the Hong-Krahn-Szeg\"{o} type inequality.
The main examples of the considered convolution type operators are the Riesz transforms with
respect to the geodesic distance of the space.
\end{abstract}

\dedicatory{Dedicated to Professor T. Sh. Kalmenov on the occasion of his 70$^{th}$ anniversary}

     \maketitle
\section{Introduction}
\label{intro}

Let $M$ be a complete, connected, simply connected Riemannian manifolds of constant sectional curvature. As it is known the three possibilities for $M$ are $\mathbb{S}^{n}, \mathbb{R}^{n}$
and $\mathbb{H}^{n}$ (the sphere, the Euclidean space, and the real hyperbolic space)
for positive, zero, and negative curvature, respectively.
In this paper we are first interested in the Riesz transform on such spaces, namely, in the
operator
\begin{equation}
\mathcal{R}_{\alpha}f(x):= \int_{M}\frac{1}{d(x,y)^{\alpha}}f(y)dy,
\label{Riesz}
\end{equation}
where $dy$ is the Riemannian measure on $M$ and $d(x,y)$ is the geodesic distance.
Restricting $f$ to be compactly supported in an open bounded set $\Omega\subset M$,
we can consider the family of operators
\begin{equation}
\mathcal{R}_{{\alpha},{\Omega}}f(x):= \int_{\Omega}\frac{1}{d(x,y)^{\alpha}}f(y)dy,\quad
0<\alpha<n,
\label{Riesz-Omega}
\end{equation}
depending on $\Omega$. In this paper we are interested in the behaviour of the first and
second eigenvalues of operators $\mathcal{R}_{{\alpha},{\Omega}}$.
Historically, in his famous
book ``Theory of Sound'' (first published in 1877), by using explicit computation
and physical arguments, Lord Rayleigh stated that a disk minimises (among all
domains of the same area) the first eigenvalue of the Dirichlet Laplacian.
The proof of this conjecture was obtained about 50 years later, simultaneously (and independently) by G.
Faber and E. Krahn. Nowadays, the Rayleigh-Faber-Krahn inequality has been
extended to many other operators, see e.g. \cite{He} for
further references. A recent general review of isoperimetric inequalities for the Dirichlet, Neumann and other Laplacians on Euclidean spaces was made by Benguria, Linde and Loewe in \cite{Ben}. We also refer to Henrot \cite{He} and Brasco and Franzina \cite{BF} for more historic remarks on isoperimetric inequalities, namely the Rayleigh-Faber-Krahn inequality and the Hong-Krahn-Szeg\"{o} inequality. 

In this note we establish a similar result also on $\mathbb S^{n}$ and $\mathbb H^{n}$ thus completing the picture now for all complete, connected, simply connected Riemannian manifolds of constant sectional curvature.
In the case of the real hyperbolic space $\mathbb H^{n}$ we also establish the
analogue of the Hong-Krahn-Szeg\"{o} inequality on $\mathbb R^{n}$, namely, the description of
$\Omega$ for which the second eigenvalue is maximised.
In fact, our results apply to a more general class of convolution type operators than the Riesz transforms \eqref{Riesz-Omega} that we will describe further.

For the Riesz potential operators on $\mathbb R^{n}$ results analogous to those of the present note have been obtained by G. Rozenblum and the authors in \cite{RRS} (see also \cite{Ruzhansky-Suragan-log} for the logarithmic potential operator). Thus, here
we restrict our attention to $\mathbb S^{n}$ and $\mathbb H^{n}$.
So, let $M$ denote $\mathbb S^{n}$ or $ \mathbb H^{n}$.

Let $\Omega\subset \mathbb S^{n}$ or $\Omega\subset \mathbb H^{n}$ be an open bounded set.
We consider the integral operator $\mathcal{K}_{\Omega}:L^{2}(\Omega)\rightarrow L^{2}(\Omega)$ defined by
\begin{equation}
\mathcal{K}_{\Omega}f(x):= \int_{\Omega}K(d(x,y))f(y)dy,\quad f\in L^{2}(\Omega),
\label{3}
\end{equation}
which we assume to be compact.
Here $d(x,y)$ is the distance between the points $x$ and $y$ in the symmetric space $\mathbb S^{n}$ or $\mathbb H^{n}$.
Throughout this note we assume that the kernel $K(\cdot)$ is
(say, a member of $L^{1}(\mathbb S^{n})$ or $L^{1}_{loc}(\mathbb H^{n})$) real, positive and non-increasing, i.e.
that the function $K:[0,\infty)\to \mathbb R^{+}$ satisfies
\begin{equation}\label{n1}
K(\rho_{1})\geq K(\rho_{2})\quad  \textrm{ if } \quad \rho_{1}\leq \rho_{2}.
\end{equation}
Since $K$ is a real and symmetric function, $\mathcal{K}_{\Omega}$ is a self-adjoint operator. Therefore, all of its eigenvalues are real.
The eigenvalues  of $\mathcal{K}_{\Omega}$ may be enumerated in the descending order of their moduli,
\begin{equation} \label{EQ:ordering}
|\lambda_{1}|\geq|\lambda_{2}|\geq...,
\end{equation}
where $\lambda_{j}=\lambda_{j}(\Omega)$ is repeated in this series according to its multiplicity.

We denote the
corresponding eigenfunctions by $u_{1}, u_{2},...,$ so that for each eigenvalue $\lambda_{j}$ there is a unique corresponding (normalised) eigenfunction $u_{j}$:
$$\mathcal{K}_{\Omega}u_{j}=\lambda_{j}(\Omega)u_{j},\,\,\,\, j=1,2,\ldots.$$
In this paper we are interested in isoperimetric inequalities of the convolution type operator $\mathcal{K}_{\Omega}$ for the first and the second eigenvalues. The main reason why the results are useful, beyond the intrinsic interest of geometric extremum problems, is that they produce \emph{a priori} bounds for spectral invariants of operators on arbitrary domains (see, for example, Remark \ref{REM:op}). For a further discussion also of higher eigenvalues in terms of
the integral kernel we refer to \cite{DR} and references therein.

In this note we prove the Rayleigh-Faber-Krahn inequality for the integral operator
$\mathcal{K}_{\Omega}$, i.e.
it is proved (in Theorem \ref{THM:1})
that the geodesic ball is a maximiser of the first eigenvalue of the integral
operator $\mathcal{K}_{\Omega}$ among all domains of a given measure in $M$.
The proof is based on the Riesz-Sobolev inequality on $M$ (established in \cite{beck}) allowing the use of the
symmetrization techniques.

As we mentioned above, for the Riesz potential operators on $\mathbb R^{n}$ some results analogous to those of the present note have been obtained by the authors and G. Rozenblum in \cite{RRS}. See also \cite{RS-UMN} for an announcement. Here we extend those results to the $n$-sphere and to the real hyperbolic space. Although there is some overlap between these settings we give complete proofs here to demonstrate our techniques on $\mathbb S^{n}$ and $\mathbb H^{n}$.
Summarising our results for operators $\mathcal{K}_{\Omega}$ in both cases of $\mathbb S^{n}$ or $\mathbb H^{n}$, we prove the following facts:
\begin{itemize}
\item Rayleigh-Faber-Krahn type inequalities: the first eigenvalue of $\mathcal{K}_{\Omega}$ is maximised
on the geodesic ball among all domains of a given measure in $\mathbb S^{n}$ or $\mathbb H^{n}$;
\item Hong-Krahn-Szeg\"{o} type inequality: the maximum of the second eigenvalue of (positive) $\mathcal{K}_{\Omega}$ among bounded open sets with a given measure in $\mathbb H^{n}$ is achieved by the union of two identical geodesic balls with mutual distance going to infinity.
\end{itemize}

In Section \ref{SEC:result} we present main results of this note.
Their proof will be given in Sections \ref{sec:3} and \ref{SEC:4}.

\section{Main results}
\label{SEC:result}

As outlined in the introduction, we assume that
$\Omega\subset M$
is an open bounded set, and we consider compact integral operators
on $L^{2}(\Omega)$ of the form
\begin{equation}
\mathcal{K}_{\Omega}f(x)=\int_{\Omega}K(d(x,y))f(y)dy,\quad f\in L^{2}(\Omega),
\label{n16}
\end{equation}
where the kernel $K$ is real, positive and non-increasing, that is, $K$
satisfies \eqref{n1}.
By $|\Omega|$ we will denote the Riemannian measure of $\Omega$.
We prove the following analogue of the Rayleigh-Faber-Krahn inequality for the integral operator $\mathcal{K}_\Omega$.
\begin{thm} \label{THM:1}
Let $M$ denote $\mathbb S^{n}$ or $\mathbb H^{n}$. The geodesic ball $\Omega^{*}\subset M$ is a
maximiser of the first eigenvalue of the operator $\mathcal{K}_{\Omega}$ among all domains of a given measure in $M$, i.e.
\begin{equation}
\label{EQ:mu1}
0<\lambda_{1}(\Omega{})\leq \lambda_{1}(\Omega^*)
\end{equation}
for an arbitrary domain $\Omega\subset M$ with $|\Omega|=|\Omega^{*}|.$
\end{thm}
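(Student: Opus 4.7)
The plan is to reduce the statement to a two-step rearrangement argument that mirrors the classical Euclidean Rayleigh–Faber–Krahn proof, using the positivity of the kernel to replace any extremising eigenfunction by its symmetric decreasing rearrangement, and then invoking the Riesz--Sobolev inequality on $M$ established in \cite{beck}.

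First I would record a variational characterisation of $\lambda_{1}(\Omega)$. Since the kernel $K$ is real, positive, symmetric in $(x,y)$ (through the distance), and $\mathcal{K}_{\Omega}$ is compact and self-adjoint, the operator is positivity-improving; Jentzsch's theorem then gives that $\lambda_{1}(\Omega)>0$, is simple, and admits a non-negative eigenfunction $u_{1}$. In particular
\begin{equation*}
\lambda_{1}(\Omega)\;=\;\sup_{0\neq f\in L^{2}(\Omega)}\frac{\langle \mathcal{K}_{\Omega}f,f\rangle}{\|f\|_{L^{2}(\Omega)}^{2}}\;=\;\frac{\int_{\Omega}\int_{\Omega}K(d(x,y))\,u_{1}(x)u_{1}(y)\,dx\,dy}{\int_{\Omega}u_{1}^{2}(x)\,dx},
\end{equation*}
where $u_{1}\geq 0$ may be assumed. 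The supremum is attained on a nonnegative function because $K\geq 0$ forces $\langle \mathcal{K}_{\Omega}|f|,|f|\rangle\geq \langle \mathcal{K}_{\Omega}f,f\rangle$.

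Next, fix a pole in $M$ (any point in $\mathbb{H}^{n}$, the north pole in $\mathbb{S}^{n}$) and let $u_{1}^{*}$ denote the symmetric decreasing rearrangement of $u_{1}$ with respect to geodesic distance from this pole; by construction $u_{1}^{*}$ is supported in the geodesic ball $\Omega^{*}$ with $|\Omega^{*}|=|\Omega|$. Equimeasurability of $u_{1}$ and $u_{1}^{*}$ gives $\|u_{1}^{*}\|_{L^{2}(\Omega^{*})}=\|u_{1}\|_{L^{2}(\Omega)}$, so the denominator in the Rayleigh quotient is preserved. For the numerator I would apply the Riesz--Sobolev rearrangement inequality on $M$ from \cite{beck}: because $K$ is a radial non-increasing function of the geodesic distance, it coincides with its own symmetric decreasing rearrangement on $M$, and hence
\begin{equation*}
\int_{\Omega}\int_{\Omega}K(d(x,y))\,u_{1}(x)u_{1}(y)\,dx\,dy\;\leq\;\int_{\Omega^{*}}\int_{\Omega^{*}}K(d(x,y))\,u_{1}^{*}(x)u_{1}^{*}(y)\,dx\,dy.
\end{equation*}

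Combining the two observations with the variational principle applied on $\Omega^{*}$ yields
\begin{equation*}
\lambda_{1}(\Omega)\;=\;\frac{\langle \mathcal{K}_{\Omega}u_{1},u_{1}\rangle}{\|u_{1}\|^{2}_{L^{2}(\Omega)}}\;\leq\;\frac{\langle \mathcal{K}_{\Omega^{*}}u_{1}^{*},u_{1}^{*}\rangle}{\|u_{1}^{*}\|^{2}_{L^{2}(\Omega^{*})}}\;\leq\;\lambda_{1}(\Omega^{*}),
\end{equation*}
which is the claim; positivity $\lambda_{1}(\Omega)>0$ was already ensured by Jentzsch's theorem. The main obstacle is entirely encapsulated in the curved Riesz--Sobolev inequality on $\mathbb{S}^{n}$ and $\mathbb{H}^{n}$, which fortunately is available from \cite{beck}; a secondary technical point is the careful verification that the symmetric decreasing rearrangement is well defined in each of the two geometries (finite measure in the spherical case requires choosing the pole so that $u_{1}^{*}$ remains radially decreasing within $\Omega^{*}$, and in the hyperbolic case one uses that $\Omega$ is bounded so $u_{1}$ vanishes at infinity).
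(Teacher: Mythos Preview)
Your proposal is correct and follows essentially the same route as the paper: invoke Jentzsch's theorem to obtain positivity and a nonnegative first eigenfunction, pass to the symmetric decreasing rearrangement (preserving the $L^{2}$ norm), apply the Riesz--Sobolev inequality on $M$ from \cite{beck} to control the numerator, and conclude via the variational characterisation of $\lambda_{1}(\Omega^{*})$. The only difference is cosmetic---the paper isolates Jentzsch's theorem as a separate lemma with a self-contained proof, whereas you cite it directly.
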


\begin{rem}\label{REM:op}
In other words Theorem \ref{THM:1} says that the operator norm
$\|\mathcal{K}_\Omega\|_{{\mathscr L}(L^{2}(\Omega))}$ is maximised in a geodesic ball among all domains of a given measure. We also note that since $\lambda_{1}$ is the eigenvalue with the largest modulus according to the ordering
\eqref{EQ:ordering}, we will show in Lemma \ref{lem:1} that $\lambda_{1}$ is actually
positive. Therefore, \eqref{EQ:mu1} is the inequality between positive numbers.
\end{rem}

We are also interested in maximising the second eigenvalue of positive operators $\mathcal{K}_{\Omega}$ on $\mathbb H^{n}$ among open sets of given measure.

\begin{thm}\label{THM:second}
If the kernel $K$ of the positive operators $\mathcal{K}_{\Omega}$ on $\mathbb H^{n}$ satisfies
\begin{equation}
\label{EQ:kinfty}
K(\rho)\rightarrow 0\;\textrm{ as }\; \rho\rightarrow \infty,
\end{equation}
then
the maximum of $\lambda_{2}$ among bounded open sets in $\mathbb H^{n}$ of a given measure is achieved by the  union of two identical geodesic balls with mutual distance going to infinity. Moreover, this maximum is equal to the first eigenvalue of one of the two geodesic balls.
\end{thm}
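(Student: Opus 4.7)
The plan is to adapt the classical Hong-Krahn-Szeg\"o strategy, using the nodal domains of the second eigenfunction together with the Rayleigh-Faber-Krahn inequality from Theorem \ref{THM:1}. First I would observe that, since $K>0$ and $\mathcal{K}_\Omega$ is a positive compact self-adjoint operator, a Jentzsch/Krein-Rutman type argument shows that $\lambda_1(\Omega)$ is simple with a strictly positive eigenfunction $u_1$. Then $u_2\perp u_1$ forces $u_2$ to change sign, so the open sets $\Omega^\pm:=\{x\in\Omega:\pm u_2(x)>0\}$ (openness from continuity of $u_2=\mathcal{K}_\Omega u_2/\lambda_2$) are nonempty, and I set $u_2^\pm:=\pm u_2\mathbf{1}_{\Omega^\pm}\geq 0$.

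Next I would prove the key bound $\lambda_2(\Omega)\leq\min(\lambda_1(\Omega^+),\lambda_1(\Omega^-))$. For $x\in\Omega^+$ the eigenvalue equation gives
\[
\lambda_2\, u_2^+(x) = \int_{\Omega^+}K(d(x,y))\,u_2^+(y)\,dy - \int_{\Omega^-}K(d(x,y))\,u_2^-(y)\,dy \leq \mathcal{K}_{\Omega^+}u_2^+(x),
\]
because the second integral is non-negative and enters with a minus sign. Multiplying by $u_2^+(x)\geq 0$, integrating over $\Omega^+$, and using the variational characterization of $\lambda_1(\Omega^+)$ as the maximum of the Rayleigh quotient for $\mathcal{K}_{\Omega^+}$, yields $\lambda_2(\Omega)\leq\lambda_1(\Omega^+)$; symmetrically for $\Omega^-$. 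Combining with Theorem \ref{THM:1} gives $\lambda_2(\Omega)\leq\lambda_1((\Omega^\pm)^*)$, where $(\Omega^\pm)^*$ are the geodesic balls of the same measure as $\Omega^\pm$.

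Next, the volume-monotonicity $B_1\subset B_2\Rightarrow \lambda_1(B_1)\leq\lambda_1(B_2)$ (immediate by extending test functions by zero in the Rayleigh quotient) implies that, among splittings with $|\Omega^+|+|\Omega^-|\leq|\Omega|$, the quantity $\min(\lambda_1((\Omega^+)^*),\lambda_1((\Omega^-)^*))$ is largest when both pieces have equal measure $|\Omega|/2$, so $\lambda_2(\Omega)\leq\lambda_1(B^*)$ with $|B^*|=|\Omega|/2$. To see that this upper bound is saturated in the limit, I would take two disjoint congruent geodesic balls $B_1,B_2$ of volume $|\Omega|/2$ at geodesic distance $\delta$ apart. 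Writing $\mathcal{K}_{B_1\cup B_2}$ in block form on $L^2(B_1)\oplus L^2(B_2)$, the diagonal blocks are $\mathcal{K}_{B_j}$ and the off-diagonal block $T_\delta$ has kernel $K(d(x,y))$ for $x\in B_1$, $y\in B_2$. Using the Hilbert-Schmidt estimate
\[
\|T_\delta\|^2\leq\int_{B_1}\int_{B_2}K(d(x,y))^2\,dy\,dx,
\]
together with $d(x,y)\geq\delta-2r$ (with $r$ the common radius) and the decay hypothesis \eqref{EQ:kinfty}, forces $\|T_\delta\|\to 0$ as $\delta\to\infty$. Standard self-adjoint perturbation theory then yields that the two largest eigenvalues of $\mathcal{K}_{B_1\cup B_2}$ converge to the doubly-degenerate top eigenvalue $\lambda_1(B^*)$ of the decoupled operator, so $\lambda_2(B_1\cup B_2)\to\lambda_1(B^*)$.

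The main obstacle is the decoupling step: it relies crucially on being in $\mathbb H^n$ rather than $\mathbb S^n$, since two congruent balls of fixed volume can be placed at arbitrarily large distance only in a space of infinite diameter, and on \eqref{EQ:kinfty} to ensure that the off-diagonal block is a vanishing perturbation (on $\mathbb S^n$ both the placement and the decay fail, which is why the theorem is stated for $\mathbb H^n$). The nodal-domain sign-change step also deserves some care, as it requires Jentzsch-type positivity for $\mathcal{K}_\Omega$ to conclude $u_1>0$ and hence the sign change of $u_2$; this is routine given the strict positivity of $K$ but should be recorded explicitly.
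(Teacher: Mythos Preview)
Your proposal is correct and follows the same Hong--Krahn--Szeg\H{o} strategy as the paper: split $\Omega$ along the nodal set of $u_2$, bound $\lambda_2(\Omega)$ above by $\lambda_1$ on each nodal piece, apply Theorem~\ref{THM:1}, and then show the bound is saturated by two far-apart balls. Two execution differences are worth recording. First, you insert an explicit volume-monotonicity step ($B_1\subset B_2\Rightarrow\lambda_1(B_1)\le\lambda_1(B_2)$) to reduce at once to the clean upper bound $\lambda_2(\Omega)\le\lambda_1(B^*)$ with $|B^*|=|\Omega|/2$; the paper instead keeps the two balls $B^\pm$ of the volumes of the actual nodal domains and reaches the ``identical balls'' conclusion only at the end by matching the upper and lower estimates. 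Your route makes the equal-split optimum more transparent. Second, for the decoupling step you write $\mathcal{K}_{B_1\cup B_2}$ in block form and kill the off-diagonal block via a Hilbert--Schmidt bound and self-adjoint perturbation theory, whereas the paper builds an explicit trial function $v^\circledast=u_+\oplus\gamma u_-$ orthogonal to the first eigenfunction of the union and plugs it into the variational formula for $\lambda_2$. Both arguments use \eqref{EQ:kinfty} in the same way (to make the cross terms vanish); your version is slightly more conceptual, the paper's is more hands-on and avoids citing perturbation results.
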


A similar type of results on $\mathbb R^{n}$ is called the Hong-Krahn-Szeg\"{o} inequality. See, for example, \cite{BF} for further references. We note that in Theorem \ref{THM:second} we consider only domains $\Omega\subset\mathbb H^{n}$ for which $\mathcal{K}_{\Omega}$ are positive operators. However, this can be relaxed:

\begin{rem}
The statement of Theorem \ref{THM:second} and its proof remain valid if we only assume that the second eigenvalues $\lambda_{2}(\Omega)$ of considered operators $\mathcal{K}_{\Omega}$ are positive.
\end{rem}

\begin{rem}
We note that the proofs in the sequel work equally well also in $\mathbb R^{n}$ and
the statements of Theorem \ref{THM:1} and Theorem \ref{THM:second} are valid with
$\mathbb H^{n}$ replaced by $\mathbb R^{n}$. See also \cite{RS-UMN} for the announcement.

In the case of $\mathbb R^{n}$ and Riesz transforms \eqref{Riesz-Omega}, Theorem
\ref{THM:second} holds without the
positivity assumption since the Riesz transforms $\mathcal{R}_{{\alpha},{\Omega}}$ on
$\mathbb R^{n}$ are positive, see \cite{RRS}.

We do not have a version of Theorem \ref{THM:second} on the spheres
$\mathbb S^{n}$ because the assumption \eqref{EQ:kinfty} does not make sense
due to compactness of the sphere.
\end{rem}

\section{Proof of Theorem \ref{THM:1}}
\label{sec:3}

Since the integral kernel of $\mathcal{K}$ is positive, the following statement, sometimes called Jentsch's theorem, applies. However, for completeness of this note we restate and give its proof on the symmetric space $M$ (that is, $\mathbb{S}^{n}$ or $\mathbb{H}^{n}$).

\begin{lem}\label{lem:1}
The first eigenvalue $\lambda_{1}$ (with the largest modulus) of the convolution type compact operator $\mathcal{K}$ is positive and simple; the corresponding eigenfunction $u_{1}$ can be chosen positive.
\end{lem}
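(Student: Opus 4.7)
The plan is to combine the variational characterization of the largest-modulus eigenvalue of a compact self-adjoint operator with the pointwise positivity of the kernel $K$, and establish the three claims --- positivity of $\lambda_1$, positivity of $u_1$, and simplicity of $\lambda_1$ --- in that order.

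The core step is to extract a nonnegative extremal of the Rayleigh quotient. Since $\mathcal{K}$ is compact and self-adjoint,
$$|\lambda_1| = \|\mathcal{K}\|_{\mathscr{L}(L^2(\Omega))} = \sup_{\|f\|_{L^2}=1} |\langle \mathcal{K} f, f\rangle|,$$
and any normalized $\lambda_1$-eigenfunction $u_1$ attains the supremum. The positivity of $K$ gives $|\langle \mathcal{K} u_1, u_1\rangle| \leq \langle \mathcal{K}|u_1|, |u_1|\rangle$, while the reverse inequality is built into the variational principle, so $|u_1|$ is also an extremal. The standard fact that maximizers of the Rayleigh quotient for a compact self-adjoint operator solve the eigenvalue equation then implies that $|u_1|$ is an eigenfunction with eigenvalue $+|\lambda_1|$, the sign being pinned down because $|u_1|\geq 0$ and $K>0$ force the right-hand side of $\mathcal{K}|u_1| = \mu |u_1|$ to be nonnegative. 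Hence one may take $\lambda_1 = |\lambda_1| > 0$ with $u_1 \geq 0$.

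For the pointwise positivity of $u_1$, on the bounded set $\Omega$ distances remain finite, so $K(d(x,y))>0$ for every pair, and rearranging the eigenvalue equation yields $u_1(x) > 0$ for every $x \in \Omega$. For simplicity, suppose $v$ is a real eigenfunction of $\lambda_1$ linearly independent of $u_1$; Gram-Schmidt against $u_1$ lets me assume $\langle v, u_1\rangle = 0$, which together with $u_1>0$ forces $v$ to change sign on $\Omega$. On the other hand, the argument of the first step applied to $v$ produces a strictly positive eigenfunction $|v|$ for $\lambda_1$, and comparing the eigenvalue equations for $v$ and $|v|$ at a fixed $x$ gives
$$\int_\Omega K(d(x,y)) |v(y)|\, dy = \lambda_1 |v|(x) = \lambda_1 |v(x)| = \left|\int_\Omega K(d(x,y)) v(y)\, dy\right|,$$
so equality holds in the triangle inequality. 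Strict positivity of $K$ then forces $v$ to have constant sign, contradicting the required sign change.

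The main obstacle is the first step: correctly identifying $|u_1|$ as an eigenfunction with the positive value $+|\lambda_1|$ rather than $-|\lambda_1|$ is what pins down the sign of $\lambda_1$, and everything else follows rather directly from $K>0$. The remainder really just repackages the strict triangle inequality for integrals against a strictly positive kernel.
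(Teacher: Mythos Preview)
Your argument is correct. Both your proof and the paper's are standard Jentsch/Perron--Frobenius arguments, but the execution differs in two places. First, the paper passes to $\mathcal{K}^{2}$: if $u_{1}$ changed sign one would get $(\mathcal{K}^{2}|u_{1}|,|u_{1}|)>\lambda_{1}^{2}\|u_{1}\|^{2}$, contradicting the variational principle for $\mathcal{K}^{2}$; the iterated kernel $\int_{\Omega}K(d(x,z))K(d(z,y))\,dz$ is automatically positive and $\lambda_{1}^{2}$ is the top eigenvalue of $\mathcal{K}^{2}$ regardless of the sign of $\lambda_{1}$, which sidesteps the sign issue entirely. You instead work directly with $\mathcal{K}$, using $\langle\mathcal{K}|u_{1}|,|u_{1}|\rangle\geq|\langle\mathcal{K}u_{1},u_{1}\rangle|$ together with the fact that a Rayleigh-quotient maximiser for a compact self-adjoint operator must be an eigenfunction; this is more direct but requires the extra observation that the sign of the extremal value is positive because $K>0$. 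Second, for simplicity the paper argues that any nontrivial linear combination $u_{1}+c\widetilde{u}_{1}$ would again be a $\lambda_{1}$-eigenfunction and hence nowhere vanishing for every real $c$, which is impossible; your route via Gram--Schmidt and equality in the triangle inequality against the strictly positive kernel is equally valid and perhaps slightly cleaner. Either approach proves the lemma.
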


\begin{proof}[Proof of Lemma \ref{lem:1}]
It is well known that for the self-adjoint compact operators the eigenvalues are real, and the corresponding eigenfunctions form a complete orthogonal basis on $L^{2}$.
The eigenfunctions of the convolution type compact operator $\mathcal{K}$
may be chosen to be real as its kernel is real. First let us prove that $u_{1}$ cannot change sign in the domain
$\Omega\subset M$, that is,
$$u_{1}(x)u_{1}(y)=|u_{1}(x)u_{1}(y)|,\,\,x,y\in\Omega\subset M.$$

In fact, in the opposite case, in view of the continuity of the function
$u_{1}(x)$, there would be neighborhoods $U(x_{0},r)\subset \Omega$ and
$U(y_{0},r)\subset \Omega$ such that

$$|u_{1}(x)u_{1}(y)|> u_{1}(x)u_{1}(y),\,\,x\in U(x_{0},r)\subset\Omega,\,\,y\in U(y_{0},r)\subset\Omega,$$
and so in view of
\begin{equation}\label{n0}
\int_{\Omega}K(d(x,z))K(d(z,y))dz>0
\end{equation}
we obtain
\begin{multline} \label{n2}
\frac{(\mathcal{K}^{2}|u_{1}|,\,|u_{1}|)}{\|u_{1}\|^{2}}=\frac{1}{\| u_{1}\|^{2}}\int_{\Omega}
\int_{\Omega}\int_{\Omega}K(d(x,z))K(d(z,y))dz |u_{1}(x)||u_{1}(y)|dxdy \\
>\frac{1}{\| u_{1}\|^{2}}\int_{\Omega}\int_{\Omega}\int_{\Omega}
K(d(x,z))K(d(z,y))dz u_{1}(x)u_{1}(y)dxdy=\lambda_{1}^{2}.
\end{multline}
We note that
$\lambda_{1}^{2}$ is the largest eigenvalue of $\mathcal{K}^{2}$ and $u_{1}$
is the eigenfunction corresponding to $\lambda_{1}^{2}$, i.e.
$$\lambda_{1}^{2}u_{1}=\mathcal{K}^{2}u_{1}.$$
Therefore, by the variational principle we have
\begin{equation}\label{n3}
\lambda_{1}^{2}=\sup_{f\in L^{2}(\Omega), f\not\equiv 0}\frac{(\mathcal{K}^{2}f,f)}{\| f\|^{2}}.
\end{equation}
This means that the strict inequality \eqref{n2} contradicts the variational
principle \eqref{n3}.

Now we shall prove that the eigenfunction $u_{1}(x)$ cannot become zero
in $\Omega$ and therefore can be chosen positive in $\Omega$.
In fact, in the opposite case there would be a point
$x_{0}\in \Omega$ such that
$$
0=\lambda_{1}^{2}u_{1}(x_{0})=\int_{\Omega}
\int_{\Omega}K(d(x_{0},z))K(d(z,y))dz\, u_{1}(y)dy,
$$
from which, in view of condition \eqref{n0},
the contradiction follows: $u_{1}(y)=0$ for almost all $y\in \Omega.$

Since $u_{1}$ is positive it follows that
$\lambda_{1}$ is a simple. In fact, if there were an eigenfunction
$\widetilde{u}_{1}$ linearly independent of $u_{1}$ and corresponding
to $\lambda_{1}$, then for all real $c$ every linear combination
$u_{1}+c\widetilde{u}_{1}$ would also be an eigenfunction corresponding
to $\lambda_{1}$ and therefore, by what has been proved, it could not become zero
in $\Omega$. As $c$ is arbitrary, this is impossible.
Finally, it remains to show $\lambda_{1}$ is positive.
It is trivial since $u_{1}$ and the kernel are positive.
\end{proof}

\begin{proof}[Proof of Theorem \ref{THM:1}]
Let $\Omega$ be a bounded measurable set in $M$ (where, as above, $M$ is $\mathbb S^{n}$ or $\mathbb H^{n}$).
Its symmetric rearrangement $\Omega^{\ast}$ is an open geodesic ball centred at $0$
with the measure equal to the measure of $\Omega,$ i.e. $|\Omega^{\ast}|=|\Omega|$.
Let $u$ be a nonnegative measurable function in $\Omega$
such that all its positive level sets have finite measure.
Here we give an abstract definition of the symmetric-decreasing rearrangement of $u$ (we refer \cite{BT} and \cite{beck1} for more detailed discussions on this subject):
Let $u$ be a nonnegative measurable function in $\Omega\subset M$. The function
\begin{equation}
u^{\ast}(x):=\intop_{0}^{\infty}\chi_{\left\{u(x)>t\right\}^{\ast}\label{2}}dt
\end{equation}
is called the (radially) symmetric-decreasing rearrangement of a nonnegative measurable function $u$.

By Lemma \ref{lem:1} the first eigenvalue $\lambda_{1}$ of the operator $\mathcal{K}$ is simple; the corresponding
eigenfunction $u_{1}$ can be chosen positive in $\Omega\subset M$.
Recall the Riesz-Sobolev inequality (see e.g., Symmetrization Lemma in \cite{beck}):
\begin{equation}\label{n10}
\int_{\Omega}\int_{\Omega}
u_{1}(y)K(d(y,x))u_{1}(x)dydx
\leq
\int_{\Omega^{\ast}}\int_{\Omega^{\ast}}
u_{1}^{\ast}(y)K(d(y,x))u_{1}^{\ast}(x)dydx.
\end{equation}
In addition, for each nonnegative function $u\in L^2(\Omega)$ we have
\begin{equation}\label{n11}
\| u\|_{L^2(\Omega)}=\| u^{\ast}\|_{L^2(\Omega^{\ast})}.
\end{equation}
Therefore, from \eqref{n10}, \eqref{n11} and the variational principle for $\lambda_{1}(\Omega^{\ast})$, we get
$$
\lambda_{1}(\Omega) =\frac{\int_{\Omega}\int_{\Omega}
u_{1}(y)K(d(y,x))u_{1}(x)dydx}
{\int_{\Omega}|u_{1}(x)|^{2}dx}\leq\frac{\int_{\Omega^{\ast}}\int_{\Omega^{\ast}}
u^{\ast}_{1}(y)K(d(y,x))
u^{\ast}_{1}(x)dydx}{\int_{\Omega^{\ast}}|u^{\ast}_{1}(x)|^{2}dx}
$$
$$
\leq\sup_{v\in
L^2(\Omega^{\ast}),v\neq 0}\frac{\int_{\Omega^{\ast}}\int_{\Omega^{\ast}}
v(y)K(d(y,x))
v(x)dydx}{\int_{\Omega^{\ast}}|v(x)|^{2}dx}={\lambda_{1}(\Omega^{\ast})},
$$
completing the proof.
\end{proof}

\section{Proof of Theorem \ref{THM:second}}
\label{SEC:4}

Proof of Theorem \ref{THM:second} is similar to the case of $\mathbb{R}^{n}$.
However, the proof does not work for $\mathbb{S}^{n}$ since we use the decay property of the kernel at infinity.

\begin{proof}[Proof Theorem \ref{THM:second}] Let us introduce the following sets:
$$\Omega^{+}:=\{x: u_{2}(x)>0\},\,\,\Omega^{-}:=\{x: u_{2}(x)<0\}.$$
Therefore,
$$u_{2}(x)\gtrless0,\,\, \forall x\in\Omega^{\pm}\subset \Omega\subset \mathbb{H}^{n}, \,\, \Omega^{\pm}\neq\{\emptyset\},$$
and it follows from Lemma \ref{lem:1} that
the domains $\Omega^{-}$ and $\Omega^{+}$ both have a positive measure.
Taking
\begin{equation}
u_{2}^{\pm}(x):=\left\{
\begin{array}{ll}
    u_{2}(x)\,\,{\rm in}\,\, \Omega^{\pm},\\
    0 \,\, {\rm otherwise}, \\
\end{array}
\right.
\end{equation}
we obtain
$$\lambda_{2}(\Omega)u_{2}(x)=\int_{\Omega^{+}}K(d(x,y))u_{2}^{+}(y)dy+
\int_{\Omega^{-}}K(d(x,y))u_{2}^{-}(y)dy,\,\, x\in\Omega.$$
Multiplying by $u_{2}^{+}(x)$ and integrating over $\Omega^{+}$ we get

\begin{multline*}
\lambda_{2}(\Omega)\int_{\Omega^{+}}|u_{2}^{+}(x)|^{2}dx=
\int_{\Omega^{+}}u_{2}^{+}(x)\int_{\Omega^{+}}K(d(x,y))u_{2}^{+}(y)dydx \\
+\int_{\Omega^{+}}u_{2}^{+}(x)
\int_{\Omega^{-}}K(d(x,y))u_{2}^{-}(y)dydx,\,\, x\in\Omega.
\end{multline*}
The second term on the right hand side is non-positive since  the integrand is non-positive.
Therefore,

$$\lambda_{2}(\Omega)\int_{\Omega^{+}}|u_{2}^{+}(x)|^{2}dx\leq
\int_{\Omega^{+}}u_{2}^{+}(x)\int_{\Omega^{+}}K(d(x,y))u_{2}^{+}(y)dydx,$$
that is,
$$\frac{\int_{\Omega^{+}}u_{2}^{+}(x)\int_{\Omega^{+}}K(d(x,y))u_{2}^{+}(y)dydx}{\int_{\Omega^{+}}|u_{2}^{+}(x)|^{2}dx}\geq\lambda_{2}(\Omega).$$
By the variational principle,
$$\lambda_{1}(\Omega^{+})=
\sup_{v\in L^{2}(\Omega^{+}), v\not\equiv 0}\frac{\int_{\Omega^{+}}v(x)\int_{\Omega^{+}}K(d(x,y))v(y)dydx}{\int_{\Omega^{+}}|v(x)|^{2}dx}$$
$$\geq\frac{\int_{\Omega^{+}}u_{2}^{+}(x)\int_{\Omega^{+}}K(d(x,y))u_{2}^{+}(y)dydx}{\int_{\Omega^{+}}|u_{2}^{+}(x)|^{2}dx}\geq\lambda_{2}(\Omega).$$
Similarly, we get
$$\lambda_{1}(\Omega^{-})\geq\lambda_{2}(\Omega).$$
So we have
\begin{equation}\label{13}
\min\{\lambda_{1}(\Omega^{+}),
\lambda_{1}(\Omega^{-})\}\geq \lambda_{2}(\Omega).
\end{equation}
We now introduce $B^{+}$ and $B^{-}$, the geodesic balls of the same volume as $\Omega^{+}$ and $\Omega^{-}$, respectively.
Due to Theorem \ref{THM:1}, we have
\begin{equation}\label{14}
\lambda_{1}(B^{+})\geq\lambda_{1}(\Omega^{+}),
\,\,\lambda_{1}(B^{-})\geq\lambda_{1}(\Omega^{-}).
\end{equation}
Combining \eqref{13} and \eqref{14}, we obtain
\begin{equation}\label{15}
\min \{\lambda_{1}(B^{+}),\,\lambda_{1}(B^{-})\}\geq \lambda_{2}(\Omega).
\end{equation}
Now let us consider the set  $B^{+}\cup B^{-},$ with the geodesic balls $B^{+}$ and $B^{-}$ placed  at distance $l$, i.e.
$$l= {\rm dist}(B^{+},B^{-}),$$
Denote by $u_1^\circledast$ the first normalised eigenfunction of $\mathcal{K}_{B^{+}\cup B^{-}}$ and take $u_{+}$ and $u_{-}$ being the first normalised eigenfunctions of each single geodesic ball, i.e., of operators $\mathcal{K}_{B^{\pm}}.$  We introduce the function $v^\circledast\in L^2(B^{+}\cup B^{-})$, which equals $u_+$ in $B^{+}$ and $\gamma u_-$ in $B^{-}$. Since the functions $u_+,u_-, u^\circledast$ are positive, it is possible to find a real number $\gamma$ so that $v^\circledast$ is orthogonal to $u_1^\circledast$.
 Observe that
\begin{equation}
\int_{B^{+}\cup B^{-}}\int_{B^{+}\cup B^{-}} v^\circledast(x)v^\circledast(y)
K(d(x,y))dxdy
=\sum_{i=1}^{4}\mathcal{I}_{i},
\end{equation}
where
$$\mathcal{I}_{1}:=\int_{B^{+}}\int_{B^{+}}u_{+}(x)u_{+}(y)
K(d(x,y))dxdy,\, \mathcal{I}_{2}:=\gamma\int_{B^{+}}\int_{B^{-}}u_{+}(x)u_{-}(y)
K(d(x,y))dxdy,$$
$$\mathcal{I}_{3}:=\gamma\int_{B^{-}}\int_{B^{+}}u_{-}(x)u_{+}(y)
K(d(x,y))dxdy,$$ $$\mathcal{I}_{4}:=\gamma^{2}\int_{B^{-}}\int_{B^{-}}u_{-}(x)u_{-}(y)
K(d(x,y))dxdy.$$
By the variational principle,
$$\lambda_{2}(B^{+}\cup B^{-})= \sup_{v\in L^{2}(B^{+}\bigcup B^{-}),\,v\perp u_{1},\,\parallel v\parallel=1}{\int_{B^{+}\cup B^{-}}\int_{B^{+}\cup B^{-}}v(x)v(y)K(d(x,y))dxdy}.$$
Since by construction $v^\circledast$ is orthogonal to $u_{1}$,
we get
$$\lambda_{2}(B^{+}\cup B^{-})\geq {\int_{B^{+}\cup B^{-}}\int_{B^{+}\cup B^{-}} v^\circledast(x)v^\circledast(y) K(d(x,y))dxdy}
= {\sum_{i=1}^{4}\mathcal{I}_{i}}.$$
On the other hand, since $u_{+}$ and $u_{-}$ are the first normalised eigenfunctions (by Lemma \ref{lem:1} both are positive everywhere) of each single geodesic ball $B^{+}$ and $B^{-}$, we have

$$\lambda_1(B^{\pm})=\int_{B^{\pm}}\int_{B^{\pm}}
u_{\pm}(x)u_{\pm}(y)K(d(x,y))dxdy.$$
Summarising the above facts, we obtain
\begin{equation}
\lambda_{2}(B^{+}\cup B^{-})\geq{\sum_{i=1}^{4}\mathcal{I}_{i}}\geq\frac{\sum_{i=1}^{4}\mathcal{I}_{i}}{1+\gamma^{2}}=
\frac{\mathcal{I}_{1}+\mathcal{I}_{4}+\mathcal{I}_{2}+\mathcal{I}_{3}}{\lambda_{1}(B^{+})^{-1}
\mathcal{I}_{1}+\lambda_{1}(B^{-})^{-1}\mathcal{I}_{4}}.
\end{equation}
Since the kernel $K(d(x,y))$ tends to zero as $x\in B^{\pm}, \ y\in B^{\mp}$
and $l\to\infty$,  we observe that
$$\lim_{l\rightarrow\infty}\mathcal{I}_{2}=\lim_{l\rightarrow\infty}\mathcal{I}_{3}=0,$$
thus
\begin{equation}\label{16}
\lim_{l\rightarrow\infty}\lambda_{2}(B^{+}\bigcup B^{-})\geq \max \{\lambda_{1}(B^{+}),\,\lambda_{1}(B^{-})\},
\end{equation}
where $l= {\rm dist}(B^{+},B^{-}).$
The inequalities \eqref{15} and \eqref{16} imply that the optimal set for $\lambda_{2}$ does not exist.
On the other hand, taking $\Omega\equiv B^{+}\bigcup B^{-}$ with $l= {\rm dist}(B^{+},B^{-})\rightarrow\infty$, and $B^{+}$ and $B^{-}$ being identical, from the inequalities \eqref{15} and \eqref{16}
we obtain
\begin{multline}\label{l21}
\lim_{l\rightarrow\infty}\lambda_{2}(B^{+}\bigcup B^{-})\geq \min \{\lambda_{1}(B^{+}),\,\lambda_{1}(B^{-})\}=\lambda_{1}(B^{+})
\\
=\lambda_{1}(B^{-})\geq\lim_{l\rightarrow\infty}\lambda_{2}(B^{+}\cup B^{-}),
\end{multline}
and this implies that the maximising sequence for $\lambda_{2}$ is given by a disjoint union of two identical geodesic balls with mutual distance going to $\infty$.
\end{proof}

\end{document}